\documentclass[12pt]{article}
\usepackage{amsmath, amssymb, amscd, amsthm, amsfonts, xfrac, enumerate}
\usepackage[titletoc,title]{appendix}
\numberwithin{equation}{section}
\usepackage{graphicx}
\usepackage{hyperref}
\usepackage{authblk}
\usepackage{url}
\hypersetup{backref, colorlinks=true}
\newtheorem{theorem}{Theorem}[section]
\newtheorem{remark}[theorem]{Remark}

\newtheorem{exam}[theorem]{Example}
\newtheorem{lemma}[theorem]{Lemma}

\title{\textbf{Positivity of arbitrary-order P-recursive sequences with a unique dominant root}}
\author{Zhongjie Li}
\affil{School of Mathematics and KL-AAGDM \break Tianjin University \break Tianjin 300350, China
\break\texttt{lizhongjie@tju.edu.cn}}
\date{}

\begin{document}
\maketitle

\begin{abstract}
We establish a sufficient condition for the ultimate positivity of P-recursive sequences of arbitrary order with a unique dominant root. By additionally verifying finitely many initial terms, the positivity can also be resolved. As an application, we provide several examples of P-recursive sequences of order greater than two.
\end{abstract}

\noindent{\textbf{Keywords}: P-recursive sequences of arbitrary order, the Positivity Problem, the Ultimate Positivity Problem. }

\section{Introduction}

Let $\{a_n\}_{n \ge 1}$ be a \emph{P-recursive sequence}. Recall that a P-recursive sequence of order $d$ satisfies a recurrence relation of the form
\[
p_0(n) a_n + p_1(n) a_{n+1} + \cdots + p_d(n) a_{n+d} = 0, \quad \forall\, n \ge 1,
\]
where $p_{i}(n)$ are polynomials in $n$. The positivity of P-recursive sequences has long been a topic of significant interest. Ouaknine and Worrell \cite{Ouaknine2014Positivity} introduced two fundamental \emph{decision problems}: the \emph{Positivity Problem} and the \emph{Ultimate Positivity Problem}. In this paper, we focus on both problems in the setting of P-recursive sequences. The Positivity Problem asks whether all terms of  a given P-recursive sequence $\{a_n\}_{n \ge{0}}$ are positive, whereas the Ultimate Positivity Problem asks whether all but finitely many terms of such a sequence are positive. 

In recent years, the study of positivity for different sequences has attracted considerable attention. Halava {\it{et al.}} \cite{Halava2006Positivity} established the decidability of the Positivity Problem for second order recurrent sequences with constant coefficients. Kenison {\it{et al.}} \cite{Kenison2021On} demonstrated that the Positivity Problem for second-order P-recursive sequences is decidable under certain conditions. Xia and Yao \cite{Xia2011The}, as well as Pei {\it{et al.}} \cite{Pei2023Positivity}, gave various sufficient conditions for three-term recurrences, which have been applied to prove the positivity for classical sequences such as the central Delannoy numbers, the Catalan-Larcombe-French numbers, and the Ap\'{e}ry numbers.

Ouaknine and Worrell \cite{Ouaknine2014On} established that both the Positivity Problem and Ultimate Positivity Problem are decidable for linear recurrence sequences of order 5 or less. They further proved in \cite{Ouaknine2014Positivity} that the Positivity Problem for simple linear recurrence sequences of order 9 or less is decidable. Moreover, in \cite{Ouaknine2014Ultimate}, they showed the decidability of the Ultimate Positivity Problem for simple linear recurrence sequences of arbitrary order. In addition, Kauers and Pillwein \cite{Kauers2010When}, as well as Ibrahim and Salvy \cite{Ibrahim2024Positivity}, respectively proposed algorithmic frameworks for verifying the positivity of P-finite sequences. For further results on the positivity of sequences, see \cite{Bell2006On, Laohakosol2009Positivity, Liu2010Positivity, Stanley2000Positivity, Straub2008Positivity}.

In this paper, we extend the method of Xia and Yao \cite{Xia2011The} from three-term recurrences to P-recursive sequences of arbitrary order. While the positivity of low-order recurrences has been extensively studied, the arbitrary-order case is substantially more complicated due to the simultaneous presence of multiple terms with mixed signs. To address this difficulty, we establish an explicit sufficient condition for positivity based on suitable bounds for the ratio sequence ${a_n}/{a_{n-1}}$. 

We consider a P-recursive sequence $\{a_n\}_{n\ge 0}$ of order $d$ defined by the following recurrence relation
\begin{align}\label{t(1.1)}
a_n = \sum_{j=1}^d \frac{P_{1j}(n)}{P_{2j}(n)} a_{n-j},
\end{align}
where $P_{ij}(n) = \sum_{l=0}^k a_{ij}^{(l)} n^l$ with $a_{ij}^{(k)}\neq 0$ for $i = 1,2$ and $j = 1, 2, \dots, d$. 

Without loss of generality, we assume that the leading coefficients $a_{2j}^{(k)}$ of all denominator polynomials $P_{2j}(n)$ are strictly positive for $1 \le j \le d$. Indeed, if this is not the case, we can multiply both $P_{1j}(n)$ and $P_{2j}(n)$ by $-1$.

To ensure that the leading coefficients $a_{1j}^{(k)}$ of the numerators $P_{1j}(n)$ are positive, we introduce the sign function 
\(\omega_j = \mathrm{sgn}(a_{1j}^{(k)})\) and define
\[
Q_{1j}(n) = \omega_j P_{1j}(n) = \sum_{l=0}^k b_{1j}^{(l)} n^l,
\]
so that the leading coefficients satisfy \(b_{1j}^{(k)} = |a_{1j}^{(k)}| > 0\). 
With this normalization, the recurrence relation can be rewritten as
\begin{align}\label{t(1.2)}
a_n = \sum_{j=1}^d \omega_j \frac{Q_{1j}(n)}{P_{2j}(n)} a_{n-j}.
\end{align}

Following \cite{Mcintosh1993Recurrences}, we associate with the recurrence (\ref{t(1.2)}) the characteristic polynomial
\begin{align}\label{t(1.3)}
p(t) = t^{d} - \sum_{j=1}^{d} \left(\omega_{j} \frac{b_{1j}^{(k)}}{a_{2j}^{(k)}} \right) t^{d-j}. 
\end{align}
Let $\mu_1, \dots, \mu_d$ denote the roots of the characteristic polynomial (\ref{t(1.3)}). According to Definition 2.2 in \cite{Ibrahim2024Positivity}, these roots can be ordered by decreasing modulus as
\[|\mu_1| = \dots = |\mu_s| > |\mu_{s+1}| \ge \dots \ge |\mu_d|.\]
Then the roots $\mu_1, \dots, \mu_s$ are called the \emph{dominant roots} of (\ref{t(1.3)}). Furthermore, we say that a root is \emph{simple} if it is a simple root of the characteristic polynomial. In this paper, we consider only those instances where there exists a unique positive real dominant root, denoted by $\mu$. Under this assumption, we can deduce suitable bounds for the ratio sequence.

\indent This paper is organized as follows. In Section \ref{s2}, we introduce some necessary notation and establish a sufficient condition for the ultimate positivity and positivity of the sequence $a_n$. In Section \ref{s3}, we apply this sufficient condition to several examples illustrating the positivity of P-recursive sequences of order greater than two.

\section{The positivity of the sequence $\{a_n\}_{n\ge 0}$}\label{s2}
In this section, we consider the Ultimate Positivity Problem and the Positivity Problem for the P-recursive sequences. Our approach is based on establishing suitable bounds for the ratio $a_n/a_{n-1}$.

We begin by introducing some notation that will be used throughout this section.

To simplify the analysis, we adopt the operator $\mathcal L$ from \cite{Xia2011The}. Let $h(n) = \sum_{i=0}^{t} e_i n^i$ be a polynomial with $e_t \neq 0$.  We define the operator $\mathcal L$ on $h(n)$ by
\[
\mathcal{L}(h(n)) = \sum_{\substack{0 \le i \le t-1 \\ e_i < 0}} |e_i|.
\]
Let $p$ and $q$ be real numbers satisfying $0 < p < \mu < q$, and assume that the following two constants are both positive,
\begin{align}
p_0 & = \sum_{j=1}^d \frac{\omega_j b_{1j}^{(k)} A_j}{l_j^{j-1}} - p \prod_{j=1}^d a_{2j}^{(k)} > 0,\label{t(2.5)}\\
q_0 & = q \prod_{j=1}^d a_{2j}^{(k)} - \sum_{j=1}^d \frac{\omega_j b_{1j}^{(k)} A_j}{h_j^{j-1}} > 0,\label{t(2.6)}
\end{align}
where
\[
l_j = 
\begin{cases} 
p, & \omega_j = -1 \\ 
q, & \omega_j = 1 
\end{cases}, \quad 
h_j = 
\begin{cases} 
q, & \omega_j = -1 \\ 
p, & \omega_j = 1 
\end{cases},
\]
and $A_j = \prod_{i \neq j} a_{2i}^{(k)}$ denotes the product of the leading coefficients of all denominators except the $j$-th one. 

With these constants, we define two polynomials $f(n)$ and $g(n)$ by
\begin{align}
f(n) &= \sum_{j=1}^d \frac{\omega_j Q_{1j}(n+1) A_j(n+1)}{l_j^{j-1}} - p \prod_{j=1}^d P_{2j}(n+1),\label{t(2.7)}\\
g(n) &= q \prod_{j=1}^d P_{2j}(n+1) - \sum_{j=1}^d \frac{\omega_j Q_{1j}(n+1) A_j(n+1)}{h_j^{j-1}} ,\label{t(2.8)}
\end{align}
where $A_j(n) = \prod_{i \neq j} P_{2i}(n)$ is the product of all denominators except the $j$-th one.

We now define a threshold by
\begin{align}\label{t(2.9)}
r = \max_{1\le j \le d} \left\{ \left\lfloor \frac{\mathcal{L}(f(n))}{p_0} \right\rfloor, \left\lfloor \frac{\mathcal{L}(g(n))}{q_0} \right\rfloor, \left\lfloor \frac{\mathcal{L}(Q_{1j}(n))}{b_{1j}^{(k)}} \right\rfloor, \left\lfloor \frac{\mathcal{L}(P_{2j}(n))}{a_{2j}^{(k)}} \right\rfloor \right\} + 1.
\end{align}
The choice of $r$ ensures the following lemma.

\begin{lemma}\label{l2.1}
The polynomials $f(n)$, $g(n)$, and $Q_{1j}(n)$, $P_{2j}(n)$ for each $1\le j\le d$, are positive for all $n\ge r$.
\end{lemma}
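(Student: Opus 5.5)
The plan is to reduce every case to one elementary estimate for a single polynomial, and then apply it to $f$, $g$, $Q_{1j}$ and $P_{2j}$ in turn.

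\textbf{Key estimate.} Let $h(n)=\sum_{i=0}^t e_i n^i$ with leading coefficient $e_t>0$, and let $n\ge 1$ be an integer with $n > \mathcal L(h(n))/e_t$. We claim $h(n)>0$. Drop the nonnegative lower-order terms and use $n^i\le n^{t-1}$ for $0\le i\le t-1$. This gives
\[
h(n)\ \ge\ e_t n^t-\sum_{\substack{0\le i\le t-1\\ e_i<0}}|e_i|\,n^i\ \ge\ e_t n^t-\mathcal L(h(n))\,n^{t-1}=n^{t-1}\bigl(e_t n-\mathcal L(h(n))\bigr)>0 .
\]
If $t=0$, then $h=e_0>0$ and there is nothing to prove. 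Since $\lfloor x\rfloor+1>x$, any integer $n\ge r$ satisfies $n>\mathcal L(h(n))/e_t$ for each of the four ratios appearing in (\ref{t(2.9)}). We also need $n\ge 1$; this holds because $\mathcal L\ge 0$ forces $r\ge 1$.

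\textbf{Applying it to $Q_{1j}$ and $P_{2j}$.} These are immediate. The leading coefficients are $b_{1j}^{(k)}>0$ and $a_{2j}^{(k)}>0$, and the corresponding ratios are exactly the terms in the definition of $r$.

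\textbf{Applying it to $f$ and $g$.} Here we must identify the leading coefficients and the degree. Each product $Q_{1j}(n+1)A_j(n+1)$ and each product $\prod_j P_{2j}(n+1)$ has degree $dk$. The leading coefficient of $Q_{1j}(n+1)A_j(n+1)$ is $b_{1j}^{(k)}A_j$, and that of $\prod_j P_{2j}(n+1)$ is $\prod_j a_{2j}^{(k)}$; the shift $n\mapsto n+1$ does not change leading coefficients. Hence the coefficient of $n^{dk}$ in $f$ is exactly $p_0$, and in $g$ it is exactly $q_0$. Both are positive by hypotheses (\ref{t(2.5)}) and (\ref{t(2.6)}). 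So $f$ and $g$ have degree exactly $dk$ with positive leading coefficients, and $\mathcal L$ acts on their coefficients of degree below $dk$. The key estimate then applies with $e_t=p_0$ and $e_t=q_0$ respectively.

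\textbf{Main obstacle.} The only delicate point is this bookkeeping for $f$ and $g$: confirming that $p_0$ and $q_0$ really are the leading coefficients. This relies on $p_0,q_0\neq 0$, which excludes any cancellation of the top-degree terms. Once that is settled, the rest of the argument is routine.
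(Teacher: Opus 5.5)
Your proof is correct and follows essentially the same route as the paper: you identify $p_0$ and $q_0$ as the coefficients of $n^{dk}$ in $f$ and $g$, then bound $h(n)\ge n^{t-1}\bigl(e_t n-\mathcal L(h(n))\bigr)>0$ using the definition of $r$. You also check explicitly that $r\ge 1$ (so that $n^i\le n^{t-1}$ is valid) and handle the constant case $t=0$, details the paper leaves implicit.
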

\begin{proof}
We first consider $f(n)$. Its leading coefficient is 
\[\sum_{j=1}^d \frac{\omega_j b_{1j}^{(k)} A_j}{l_j^{j-1}} - p \prod_{j=1}^d a_{2j}^{(k)}=p_{0} > 0.\]

Write
\[
f(n) = p_0 n^{dk} + \sum_{0\le i\le dk-1} F_i n^i,
\]
where $F_i$ denotes the coefficients of $n^i$ in $f(n)$.

By the definition of $r$, we have
\begin{align*}
f(n) & \ge p_0 n^{dk} + \sum_{\substack{0\le i\le dk-1 \\ F_i < 0}} F_i n^i\\
& \ge p_0 n^{dk}-\mathcal{L}(f(n)) n^{dk-1}\\
&= n^{dk-1} \bigl(p_0 n - \mathcal{L}(f(n))\bigr)\\
& > 0.
\end{align*}

The same argument applies to $g(n)$, $Q_{1j}(n)$ and $P_{2j}(n)$ (for each $1\le j\le d$), completing the proof.
\end{proof}

Based on Lemma \ref{l2.1}, we next derive sufficient conditions to guarantee the positivity and ultimate positivity of the P-recursive sequence $\{a_n\}_{n\ge 0}$.

\begin{theorem}\label{t2.2}
Let $a_n$ be a P-recursive sequence defined by the recurrence (\ref{t(1.2)}). Suppose that
\[\lim_{n \to \infty} \frac{a_n}{a_{n-1}} = \mu.\]
Then there exists an integer $u \ge r$ such that for all $n\ge u$, the following inequality holds
\begin{align}\label{t(2.1)}
p < \frac{a_n}{a_{n-1}} < q.
\end{align}
Furthermore, for every such $u$, if $a_{u} > 0$, then the sequence $\{a_n\}_{n \ge u}$ is positive. In particular, if $a_n > 0$ for $0\le n\le u$, the sequence $\{a_n\}_{n \ge 0}$ is positive.
\end{theorem}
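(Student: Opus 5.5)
The first claim follows directly from the hypothesis. Since $a_n/a_{n-1}\to\mu$ and $p<\mu<q$, take $\varepsilon=\min\{\mu-p,\,q-\mu\}>0$. There is an $N$ such that $|a_n/a_{n-1}-\mu|<\varepsilon$ for all $n\ge N$, and this is exactly (\ref{t(2.1)}). Setting $u=\max\{N,r\}$ gives $u\ge r$ with the required property. Note that the hypothesis already forces $a_{n-1}\neq 0$ for large $n$, so the ratios make sense there.

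For the positivity claim, fix any such $u$ with $a_u>0$. I will show $a_n>0$ for all $n\ge u$ by induction on $n$. Suppose $a_{n-1}>0$ for some $n-1\ge u$. By (\ref{t(2.1)}) we have $a_n/a_{n-1}>p>0$, so $a_n>pa_{n-1}>0$. This closes the induction.

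For the final statement, assume additionally $a_n>0$ for $0\le n\le u$. Then the previous paragraph gives $a_n>0$ for $n\ge u$, and together these cover all $n\ge 0$.

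The logic needs nothing beyond the limit hypothesis. The polynomials $f,g$, the constants $p_0,q_0$ and Lemma \ref{l2.1} play no role in it; their only effect here is through the requirement $u\ge r$. I expect they matter for the paper's later, effective verification of (\ref{t(2.1)}). That verification would run as an induction: assume $p<a_{n-j}/a_{n-j-1}<q$ for the previous ratios, divide (\ref{t(1.2)}) by $a_{n-1}$, and bound each term $a_{n-j}/a_{n-1}$ between powers of $p$ and $q$, using $l_j$ or $h_j$ according to the sign $\omega_j$. Clearing the positive denominators $P_{2j}(n)$ (positive by Lemma \ref{l2.1}) then reduces $a_n/a_{n-1}>p$ to $f(n-1)>0$ and $a_n/a_{n-1}<q$ to $g(n-1)>0$. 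That matching of the exponent $j-1$ and sign bookkeeping would be the main obstacle in an explicit version. It is not needed for the theorem as stated, which I will prove by the limit argument above.
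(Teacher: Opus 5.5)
Your proof is correct for the statement as literally written, but it takes a different and much more elementary route than the paper. You obtain $u=\max\{N,r\}$ directly from the limit, and positivity is then a one-line induction; in this reading $r$, $f$, $g$ and Lemma \ref{l2.1} indeed play no role. The paper instead calls $u\ge r$ \emph{admissible} if $p<a_n/a_{n-1}<q$ holds merely for $n=u,\dots,u+d-1$, and uses the limit only to show that admissible integers exist. It then proves by induction on $m$ that the bound propagates from any admissible $u$ to all $n\ge u$. The step substitutes the recurrence, bounds $\omega_j a_{m-j+1}/a_m>\omega_j/l_j^{j-1}$ via (\ref{t(2.2)}), and uses positivity of $Q_{1j}(m+1)$ and $P_{2j}(m+1)$ to get $a_{m+1}/a_m-p>f(m)/\prod_j P_{2j}(m+1)>0$, and symmetrically $q-a_{m+1}/a_m>g(m)/\prod_j P_{2j}(m+1)>0$. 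So the paper's proof really establishes the stronger statement in which ``such $u$'' means any admissible $u$. That is what buys effectiveness. Your $N$ is not computable, so your argument cannot certify positivity of a concrete sequence. The paper's argument yields the finite check of Remark \ref{r2.3} ($d$ ratios plus $a_u>0$), which is exactly how the examples in Section \ref{s3} are handled. Moreover, once an admissible $u$ has been exhibited, the propagation step no longer needs the limit hypothesis at all. Your sketch of the effective version agrees with the paper's argument. The only thing to add is that multiplying the ratio bounds by the coefficients also requires $Q_{1j}(n)>0$ (which Lemma \ref{l2.1} provides), not just $P_{2j}(n)>0$.
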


\begin{proof}
We prove the inequality (\ref{t(2.1)}) by induction on $n$. 

We call an integer $u \ge r$ \emph{admissible} if
\[
p < \frac{a_n}{a_{n-1}} < q \quad \text{for all } n = u, u+1, \dots, u+d-1.
\]
Since
\[
0 < p < \lim_{n \to \infty} \frac{a_n}{a_{n-1}} = \mu < q,
\]
there exists an integer $N$ such that the inequality (\ref{t(2.1)}) holds for all $n \ge N$. 
Hence every $u \ge N$ is admissible, so admissible integers $u$ exist. In addition, there may also exist admissible integers with $u < N$.

Fix an arbitrary admissible integer $u \ge r$. Then the inequality (\ref{t(2.1)}) holds for all $n = u, u+1, \dots, u+d-1$. Now assume that the inequality (\ref{t(2.1)}) holds for $u \le n \le m$, where $m\ge u+d-1$. It follows that
\[p<\frac{a_m}{a_{m-1}}<q.\]
This implies that for any $1\le k\le m-u+1$, we have 
\[p^k <\frac{a_m}{a_{m-k}} < q^k,\]
which can be rewritten as
\begin{align}\label{t(2.2)}
\frac{1}{q^k} < \frac{a_{m-k}}{a_{m}} < \frac{1}{p^k}.
\end{align}

We now prove the case for $n = m+1$. First, we consider the left-hand side of the inequality (\ref{t(2.1)}). Substituting the recurrence relation (\ref{t(1.2)}) satisfied by $a_n$, we obtain
\begin{align}
& \frac{a_{m+1}}{a_m} - p \nonumber\\
= & \sum_{j=1}^d \omega_j \frac{Q_{1j}(m+1)}{P_{2j}(m+1)}  \frac{a_{m-j+1}}{a_m} - p \nonumber\\ 
= & \frac{\sum_{j=1}^d \omega_j \frac{a_{m-j+1}}{a_{m}} Q_{1j}(m+1) A_j(m+1)  - p \prod_{j=1}^d P_{2j}(m+1)}{\prod_{j=1}^d P_{2j}(m+1)}.\label{t(2.3)}
\end{align}
Based on the definition of $l_j$, we have
\begin{align}\label{t(2.4)}
\omega_j \frac{a_{m-j+1}}{a_m} > 
\begin{cases} 
\frac{1}{q^{j-1}}, & \omega_j = 1 \\ 
-\frac{1}{p^{j-1}}, & \omega_j = -1 
\end{cases}
=\frac{\omega_j}{l_{j}^{j-1}} \quad \text{for } 2\le j\le d.
\end{align}
Combining (\ref{t(2.3)}) and (\ref{t(2.4)}), we derive
\begin{align*}
\frac{a_{m+1}}{a_m} - p > \frac{f(m)}{\prod_{j=1}^d P_{2j}(m+1)}.
\end{align*}
By Lemma \ref{l2.1}, we know that $f(m) > 0$ and $P_{2j}(m+1) > 0$ for all $m \ge u \ge r$. Therefore, 
\[\frac{a_{m+1}}{a_{m}} > p .\]

Similarly, we can deduce that
\[
q - \frac{a_{m+1}}{a_m} > \frac{g(m)}{\prod_{j=1}^d P_{2j}(m+1)} > 0,
\]
which implies
\[
q > \frac{a_{m+1}}{a_m}.
\]

Hence,
\[
p < \frac{a_{m+1}}{a_m} < q.
\]

Therefore, by induction, the inequality (\ref{t(2.1)}) holds for all $n \ge u$. 

Since $\frac{a_n}{a_{n-1}} > p > 0$, the sequence $a_n$ has constant sign for all $n \ge u$. In particular, if $a_u > 0$, then $a_n > 0$ for all $n \ge u$. 

If moreover $a_n > 0$ for $0 \le n \le u$, then the sequence $a_n$ is positive for all $n \ge 0$.

\end{proof}

\begin{remark}\label{r2.3}
\upshape
Based on Theorem \ref{t2.2}, the verification of positivity reduces to checking the following conditions:
\begin{enumerate}[(1)]
    \item $p < \frac{a_n}{a_{n-1}} < q$ for $n = u, u+1, \dots, u+d-1$;
    \item $a_u > 0$;
    \item $a_n > 0$ for $0 \le n \le u$.
\end{enumerate}
Conditions (1) and (2) ensure ultimate positivity, while conditions (1) and (3) ensure positivity of the entire sequence.

To minimize the verification effort, it is sufficient to determine the smallest admissible integer $u$. Since Theorem \ref{t2.2} guarantees the existence of an admissible integer $u \ge r$, we can start from $u = r$ and check condition (1). If it is satisfied, then $u$ is admissible; otherwise, increase $u$ by one and repeat. This procedure terminates after finitely many steps and yields the minimal admissible $u$.
\end{remark}

\section{Applications}\label{s3}
In this section, we provide several examples to demonstrate the effectiveness of Theorem \ref{t2.2} in handling higher-order recurrences for which previous positivity criteria are difficult to use.

\begin{exam}\label{e3.1}
Let $f_n$ denote the Franel number of order 5. It is known from \cite{Cusick1989Recurrences} that $f_n$ is a P-recursive sequence of order $d=3$ satisfying the recurrence relation, 
\begin{equation}\label{t(3.1)}
f_n = \frac{Q_{11}(n)}{P_{21}(n)}f_{n-1} + \frac{Q_{12}(n)}{P_{22}(n)}f_{n-2} - \frac{Q_{13}(n)}{P_{23}(n)}f_{n-3},
\end{equation}
with initial values $f_0 = 1$, $f_1 = 2$ and $f_2 = 34$, where the polynomial coefficients are given by
\begin{align*}
P_{21}(n) & = P_{22}(n) = P_{23}(n) =  n^4(55n^2 - 187n + 160), \\
Q_{11}(n) & = 1155n^6 - 6237n^5 + 13128n^4 - 13957n^3 + 8193n^2 - 2562n + 336, \\
Q_{12}(n) & = 19415n^6 - 143671n^5 + 434583n^4 - 686633n^3 + 596930n^2 \\
&\quad - 270704n + 50176, \\
Q_{13}(n) & = 32(n - 2)^4(55n^2 - 77n + 28).
\end{align*}
We aim to prove that $f_n > 0$ for all $n \ge 0$.
\end{exam}
To study the positivity of $f_n$, we consider the characteristic polynomial $p(t)$ associated with the recurrence relation (\ref{t(3.1)}),
\[
p(t) = t^3 - 21t^2 - 353t + 32.
\]

It follows from a direct computation that $t = 32$ is the unique positive dominant root of $p(t)$.
Hence, we have 
\[
\mu = 32.
\]

We choose $p = 30$ and $q =33$ such that $0 < p < \mu = 32 < q$. Substituting the values into  (\ref{t(2.5)}) and (\ref{t(2.6)}), we obtain
\[
p_0 = \frac{2487760}{9},\quad q_0 = \frac{786775}{18}.
\]

Next, we compute $f(n)$ and $g(n)$ using their definition in (\ref{t(2.7)}) and (\ref{t(2.8)}). They are both degree-$18$ polynomials with positive leading coefficients,
\begin{align*}
f(n) & = \frac{2487760}{9}n^{18} - \frac{35745094}{3}n^{17} + \cdots - \frac{505347584}{825},\\
g(n) & = \frac{786775}{18}n^{18} + \frac{90780415}{6}n^{17} + \cdots + \frac{1230826688}{1815}.
\end{align*}

By the definition of (\ref{t(2.9)}), we obtain
\[ r = 27099.\]

Using {\tt{Mathematica}}, we verify that
\begin{align*}
30 < \frac{f_{27099}}{f_{27098}} < 33,\quad
30 < \frac{f_{27100}}{f_{27099}} < 33,\quad
30 < \frac{f_{27101}}{f_{27100}} < 33.
\end{align*}
Therefore, based on condition (1) in Remark \ref{r2.3}, we know that $u = r = 27099$ is the minimal admissible $u$. 

Since $f_{27099} > 0$, it follows from Theorem \ref{t2.2} that 
\[f_n > 0 \quad \text{for all } n \ge 27099.\]

Together with the verification that $f_n > 0$ for $0 \le n \le 27098$, we conclude that 
\[f_n > 0 \quad \text{for all } n \ge 0.\]

\begin{exam}
Let $b_n$ denote the diagonal coefficients of the Gillis-Reznick-Zeilberger rational function for $r=4$. It was shown in \cite{Pillwein2019On} that the sequence $b_n$ is P-recursive of order $d = 4$, satisfying a recurrence relation of the form
\begin{equation}\label{t(3.2)}
b_n = \frac{Q_{11}(n)}{P_{21}(n)}b_{n-1} - \frac{Q_{12}(n)}{P_{22}(n)}b_{n-2} - \frac{Q_{13}(n)}{P_{23}(n)}b_{n-3} - \frac{Q_{14}(n)}{P_{24}(n)}b_{n-4},
\end{equation}
for $n\ge4$, with initial values
\[
b_0=1,\quad
b_1=0,\quad
b_2=216,\quad
b_3=18816,
\]
and the polynomial coefficients are given by
\begin{align*}
P_{21}(n) & = P_{22}(n) = P_{23}(n) = P_{24}(n) = n^3(2n-3)(4n-9)(4n-5), \\
Q_{11}(n) & = 8(n-1)(4n-9)(4n-3)(40n^3-100n^2+73n-12), \\
Q_{12}(n) & =  576(192n^6-1536n^5+4748n^4-7050n^3+5065n^2-1563n+171),\\
Q_{13}(n) & = 13824(4n-1)(32n^5-296n^4+1040n^3-1689n^2+1209n-279),\\
Q_{14}(n) & = 331776(2n-1)(4n-5)(4n-1)(n-3)^3.
\end{align*}
We now show that $b_n>0$ for all $n\ge 2$.

\end{exam}

The characteristic polynomial associated with recurrence (\ref{t(3.2)}) is
\[
p(t)=t^4-160t^3+3456t^2+55296t+331776.
\]

It follows that the polynomial $p(t)$ has the unique dominant positive root
\[
\mu=129.99.
\]

Choose $p=64$ and $q=226$ so that $0<p<\mu = 129.99 <q$. A direct computation gives
\[p_0 = 28557312 > 0,\quad q_0 = \frac{124675285843968}{1442897} >0.\]

Using the definitions in Section \ref{s2}, we compute the corresponding polynomials $f(n)$ and $g(n)$,
\begin{align*}
f(n) & = 28557312n^{24} + 104103936n^{23} +\cdots + \frac{31375}{2},\\
g(n) & = \frac{124675285843968}{1442897}n^{24} + \frac{784776214609920}{1442897}n^{23} + \cdots + \frac{188783701250}{1442897}.
\end{align*}

Hence, $r=1148$.

Next, we check that
\[
64<\frac{b_n}{b_{n-1}}<226, \quad \text {for } 1148\le n\le 1151.
\]
Therefore, $u=1148$ is admissible.

Since $b_{1148} >0$, Theorem \ref{t2.2} implies that
\[
b_n>0
\quad\text{for all } n\ge1148.
\]

Combining this with $b_n>0$ for $2\le n\le 1147$, we conclude that
\[
b_n>0
\quad\text{for all } n\ge2.
\]

\begin{exam}\label{e3.3}
Let $c_n$ denote the number of hill-free Dyck paths of semilength $n$
avoiding the pattern $UUDD$, where $U=(1,1)$ and $D=(1,-1)$.
This sequence corresponds to OEIS A105641 and belongs to
the family of Fine-number refinements studied in
\cite{Deutsch2001Survey}.
The sequence $\{c_n\}_{n\ge 2}$ is P-recursive of order $6$ and satisfies
the recurrence
\begin{align}\label{t(3.3)}
c_n
&= \frac{7n-5}{2(n+1)}\,c_{n-1}
 - \frac{n-5}{2(n+1)}\,c_{n-2}
 + \frac{2(n+1)}{2(n+1)}\,c_{n-3}
 - \frac{2(2n-7)}{2(n+1)}\,c_{n-4} \nonumber\\
&\quad
 - \frac{n-5}{2(n+1)}\,c_{n-5}
 - \frac{n-5}{2(n+1)}\,c_{n-6},
\end{align}
for $n\ge 8$, with initial values
\[
c_2 = 0,\quad
c_3 = 1,\quad
c_4 = 2,\quad
c_5 = 5,\quad
c_6 = 14,\quad
c_7 = 39.
\]
We now apply Theorem~\ref{t2.2} to prove that
\[
c_n > 0
\qquad \text{for all } n\ge 3.
\]
\end{exam}

The characteristic polynomial associated with (\ref{t(3.3)}) is
\[
p(t)=t^6-\frac{7}{2}t^5+\frac{1}{2}t^4-t^3+2t^2+\frac{1}{2}t+\frac{1}{2},
\]
whose unique dominant positive root is
\[
\mu=3.38298.
\]

Choose $p=3$ and $q=\frac{7}{2}$ such that $0<p<\mu = 3.38298 <q$. A direct verification shows that
\[p_0 = \frac{253504}{11907} > 0,\quad q_0 = \frac{800384}{151263} >0.\]

Moreover,
\begin{align*}
f(n) & = \frac{253504}{11907}n^{6} - \frac{158848}{3969}n^5 - \cdots - \frac{96370688}{11907},\\
g(n) & = \frac{800384}{151263}n^{6} + \frac{19036928}{50421}n^5 + \cdots + \frac{1571422208}{151263}.
\end{align*}

Hence, $r=2645$.

Next, we check that
\[
3<\frac{c_n}{c_{n-1}}<\frac{7}{2}, \quad \text {for } 2645\le n\le 2650.
\]
Therefore, $u=r =2645$ is admissible.

Since $c_{2645} >0$, Theorem \ref{t2.2} yields
\[
c_n>0
\quad\text{for all } n\ge2645.
\]

Combining this with the positivity of $c_n$ for $3 \le n \le 2644$, we conclude that
\[
c_n>0
\quad\text{for all } n\ge 3.
\]

\begin{remark}
\upshape
The explicit symbolic computations are omitted from the paper for readability. They are available in the accompanying Mathematica notebook.
\end{remark}

\section*{Acknowledgments}
The author would like to thank professor Manuel Kauers for his valuable suggestions. This work was supported by the \emph{China Scholarship Council} (No. 202506250062). Part of this work was completed during her visit at Johannes Kepler University in 2025--2026.

\end{document}